\newtheorem{thm}{Theorem}[section]
\newtheorem{lemma}[thm]{Lemma}
\theoremstyle{definition}
\def\N{\mathbb N}
\def\pmod #1{\ ({\rm mod}\ #1)}
\def\le{\leqslant}
\def\ge{\geqslant}
\numberwithin{equation}{section}
\begin{document}

%%%%% To ease editing, for IMPAN journals add:

\baselineskip=17pt

%%%%%%%%%%%

%% In the running head, replace first names by initials
%% and give an abbreviation of the title.

\title[Exceptional sets: two squares and $s$ biquadrates]{Exceptional sets in Waring's problem: two squares and $s$ biquadrates}

\author[L. Zhao]{Lilu  Zhao}
\address{School of Mathematics, Hefei University of Technology, Heifei 230009, People's Republic of China}
\email{zhaolilu@gmail.com}

\date{}

\begin{abstract}
Let $R_s(n)$ denote the number of representations of the positive
number $n$ as the sum of two squares and $s$ biquadrates. When
$s=3$ or $4$, it is established that the anticipated asymptotic
formula for $R_s(n)$ holds for all $n\le X$ with at most
$O(X^{(9-2s)/8+\varepsilon})$ exceptions.
\end{abstract}

\subjclass[2010]{Primary 11P05; Secondary 11P55, 11N37}

\keywords{circle method, Waring's problem, exceptional sets,
asymptotic formula}

\maketitle
\section{Introduction}

Waring's problem for sums of mixed powers involving one or two
squares has been widely investigated. In 1987-1988, Br\"{u}dern
\cite{B1,B2} considered the representation of $n$ in the form
\begin{align*}n=x_1^2+x_2^2+y_1^{k_1}+\cdots+ y_s^{k_s},\end{align*}
with $k_1^{-1}+\cdots +k_s^{-1}>1$. Linnik \cite{L} and Hooley
\cite{H} investigated sums of two squares and three cubes. In
2002, Wooley \cite{Wooley02Q} investigated the exceptional set
related to the asymptotic formula in Waring's problem involving
one square and five cubes. Recently, Br\"{u}dern and Kawada
\cite{BK} established the asymptotic formula for the number of
representations of the positive number $n$ as the sum of one
square and seventeen fifth powers.

Let $R_s(n)$ denote the number of representations of the positive
number $n$ as the sum of two squares and $s$ biquadrates. Very
recently, subject to the truth of the Generalised Riemann
Hypothesis and the Elliott-Halberstam Conjecture, Friedlander and
Wooley \cite{FW} established that $R_3(n)>0$ for all large $n$
under certain congruence conditions. They also showed that if one
is prepared to permit a small exceptional set of natural numbers
$n$, then the anticipated asymptotic formula for $R_s(n)$ can be
obtained. To state their results precisely, we introduce some
notations. We define
\begin{align}\mathfrak{S}_s(n)=\sum_{q=1}^\infty\sum_{\substack{a=1\\ (a,q)=1}}^{q}q^{-2-s}S_2(q,a)^2S_4(q,a)^se(-na/q),\end{align}
where the Gauss sum $S_k(q,a)$ is defined as
\begin{align}\label{gauss}S_k(q,a)=\sum_{r=1}^{q}e(ar^k/q).\end{align}
As in \cite{FW}, we refer a function $\psi(t)$ as being a
\textit{sedately increasing function} when $\psi(t)$ is a function
of a positive variable $t$, increasing monotonically to infinity,
and satisfying the condition that when $t$ is large, one has
$\psi(t)=O(t^\delta)$ for a positive number $\delta$ sufficiently
small in the ambient context. Then we introduce $E_s(X,\psi)$ to
denote the number of integers $n$ with $1\le n\le X$ such
that\begin{align}\label{lower}\big|R_s(n)-c_s
\Gamma(\frac{5}{4})^4\mathfrak{S}_s(n)n^{s/4}\big|>n^{s/4}\psi(n)^{-1},\end{align}
where $c_3=\frac{2}{3}\sqrt{2}$ and $c_4=\frac{1}{4}\pi$.
Friedlander and Wooley \cite{FW} established the upper bounds
\begin{align}\label{E3}E_3(X,\psi)\ll X^{1/2+\varepsilon}\psi(X)^2\end{align}
and
\begin{align}\label{E4}E_4(X,\psi)\ll X^{1/4+\varepsilon}\psi(X)^4,\end{align}
where $\varepsilon>0$ is arbitrary small.

The main purpose of this note is to establish the following
result.
\begin{thm}\label{theorem}
Suppose that $\psi(t)$ is a sedately increasing function. Let
$E_s(X,\psi)$ be defined as above. Then for each $\varepsilon>0$,
one has\begin{align}E_3(X,\psi)\ll
X^{3/8+\varepsilon}\psi(X)^2\end{align} and
\begin{align}E_4(X,\psi)\ll X^{1/8+\varepsilon}\psi(X)^2,\end{align}
where the implicit constants may depend on $\varepsilon$.
\end{thm}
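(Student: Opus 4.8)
The plan is to attack $R_s(n)$ by the circle method and to convert the problem of bounding the exceptional set into a single minor-arc mean value estimate. Write $P=X^{1/2}$ and $Q=X^{1/4}$, and set $f(\alpha)=\sum_{x\le P}e(\alpha x^2)$ and $g(\alpha)=\sum_{y\le Q}e(\alpha y^4)$, so that $R_s(n)=\int_0^1 f(\alpha)^2 g(\alpha)^s e(-n\alpha)\,d\alpha$. After a routine dyadic splitting it suffices to treat $n\asymp X$, the variables being rescaled accordingly and the resulting logarithm absorbed into $X^\varepsilon$. I would dissect $[0,1]$ into major arcs $\mathfrak{M}$ and minor arcs $\mathfrak{m}$ in the Hardy--Littlewood manner, taking the major arcs wide enough that the peaks of $g$ near rationals of moderate denominator are absorbed into $\mathfrak{M}$. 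On $\mathfrak{M}$ the standard analysis, which for $s=3,4$ is already carried out in \cite{FW}, yields the main term $c_s\Gamma(\frac{5}{4})^4\mathfrak{S}_s(n)n^{s/4}$ with an error that is $o(n^{s/4}\psi(n)^{-1})$ uniformly in $n$; consequently every exceptional $n$ contributes its full discrepancy to the minor arcs, in the sense that $\big|\int_{\mathfrak m}f^2g^se(-n\alpha)\,d\alpha\big|\gg X^{s/4}\psi(X)^{-1}$.

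The second step is a duality (Bessel) argument that produces the exponent $\psi^2$ for both values of $s$, improving the $\psi^4$ of \cite{FW} in the case $s=4$. Writing $\phi(n)=\int_{\mathfrak m}f^2g^se(-n\alpha)\,d\alpha$, the numbers $\phi(n)$ are the Fourier coefficients of $f^2g^s\,\mathbf{1}_{\mathfrak m}$, so Bessel's inequality gives $\sum_{n}|\phi(n)|^2\le\int_{\mathfrak m}|f(\alpha)|^4|g(\alpha)|^{2s}\,d\alpha$. Restricting the left-hand sum to the exceptional set and inserting the lower bound for $|\phi(n)|$ yields
\[
E_s(X,\psi)\,X^{s/2}\psi(X)^{-2}\ll\int_{\mathfrak m}|f(\alpha)|^4|g(\alpha)|^{2s}\,d\alpha .
\]
Thus the whole theorem reduces to the single estimate
\[
\int_{\mathfrak m}|f(\alpha)|^4|g(\alpha)|^{2s}\,d\alpha\ll X^{(2s+9)/8+\varepsilon},
\]
since $X^{-s/2}\cdot X^{(2s+9)/8}=X^{(9-2s)/8}$ gives precisely $E_3\ll X^{3/8+\varepsilon}\psi^2$ and $E_4\ll X^{1/8+\varepsilon}\psi^2$.

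The heart of the argument, and the main obstacle, is this minor-arc mean value. The naive route---extracting $\big(\sup_{\mathfrak m}|g|\big)^{2s-2t}$ by Weyl's inequality and bounding the remaining $\int_0^1|f|^4|g|^{2t}$ by its \emph{complete} mean value---fails, because the complete mean value is dominated by the major-arc main term of size $X^{1+t/2}$, which is far too large (the trivial bound it produces is only $X^{1/2}$ or worse). I would instead keep the integral on the minor arcs throughout and prune. Dividing $\mathfrak m$ according to the size of the denominator $q$ in the Dirichlet approximation $\alpha=a/q+\beta$, I would use two complementary inputs: the rapid decay $|f(\alpha)|\ll q^{-1/2}P$ of the square Weyl sum (the exponent $\tfrac12$ being the favourable feature of squares) together with the sharp fourth moment $\int_0^1|f|^4\ll P^{2+\varepsilon}$; and, for the biquadrates, the minor-arc bound $\sup_{\mathfrak m}|g|\ll Q^{1-\tau+\varepsilon}$ from Weyl's inequality reinforced by the best available even moments of $g$ (Hua's inequality and its refinements). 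On each piece the small measure near $a/q$ is weighed against the genuine decay of $f$ and $g$, so that the main term never re-enters and the surviving contribution is controlled by diagonal-type solutions; summing over the pieces and optimising the split between the pointwise and mean-value estimates should deliver the required bound $X^{(2s+9)/8+\varepsilon}$, saving $X^{7/8}$ (respectively $X^{5/8}$) over the trivial size of the complete integral.

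The delicate point I expect to dominate the work is the calibration of the dissection: the major arcs must be wide enough to suppress the spurious main-term contribution to $\int_{\mathfrak m}|f|^4|g|^{2s}$, yet narrow enough that the pointwise major-arc approximation to $R_s(n)$ retains an error below $n^{s/4}\psi(n)^{-1}$ for \emph{every} $n\le X$, not merely on average. Balancing these two requirements against one another, and extracting the full strength of the biquadrate moment estimates on the pruned minor arcs, is where the improvement over \cite{FW} must come from.
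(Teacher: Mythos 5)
Your frame (dyadic localisation, narrow major arcs handled by the Friedlander--Wooley lemma, every exceptional $n$ forced to contribute $\gg X^{s/4}\psi(X)^{-1}$ to the minor arcs) agrees with the paper, but your central reduction has a genuine gap: the Bessel step commits you to the mean value $\int_{\mathfrak m}|f(\alpha)|^4|g(\alpha)|^{2s}\,d\alpha\ll X^{(2s+9)/8+\varepsilon}$, and this estimate is false for every admissible dissection. If the major arcs are narrow --- as they must be for the uniform pointwise approximation; the paper takes $\mathfrak M=\mathfrak M(P_4^{\nu})$, and Lemma \ref{lemmaM} (i.e.\ (4.1) of \cite{FW}) is only available there --- then $\mathfrak m$ contains the arcs around $a/q$ with $q\asymp P_4^{\nu}$, on which $|f|\asymp q^{-1/2}X^{1/2}(1+X|\beta|)^{-1/2}$ and $|g|\asymp q^{-1/4}X^{1/4}(1+X|\beta|)^{-1/4}$; integrating the peak gives a contribution $\asymp X^{1+s/2}q^{-s/2}$ from each dyadic range of moduli, hence $\gg X^{1+s/2-O(\nu)}$ in all. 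That is roughly $X^{5/2}$ against your target $X^{15/8}$ when $s=3$, and roughly $X^{3}$ against $X^{17/8}$ when $s=4$. Your closing paragraph anticipates the cure (widen the arcs), but the two requirements you list are mutually incompatible: to swallow the peaks you would need $\mathfrak M(W)$ with $W\ge X^{(2s-1)/(4s)}$, i.e.\ $W\ge X^{5/12}$ (resp.\ $X^{7/16}$), and on arcs that wide a pointwise error $o(n^{s/4}\psi(n)^{-1})$ for \emph{every} $n$ is out of reach --- for $s=3$ the tail of the singular series is not even absolutely convergent (the terms are only $O(q^{-3/4})$), which is exactly why Friedlander and Wooley needed GRH and Elliott--Halberstam for the pointwise representation problem. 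Executed optimally --- prune to the region where $|f|\ll X^{1/4+\varepsilon}$ and bound $\int_0^1|f|^2|g|^{2s}\,d\alpha\ll X^{s/2+\varepsilon}$ by a divisor argument --- your route recovers precisely the earlier bound $E_3\ll X^{1/2+\varepsilon}\psi(X)^2$ of \cite{FW}; it cannot reach $X^{3/8}$ or $X^{1/8}$.

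The missing idea, and the actual engine of the paper, is Wooley's slim-exceptional-set device, which replaces flat Bessel by a weighted version. One keeps the generating function of the exceptional set, $\mathcal E(\alpha)=\sum_{n\in\mathcal Z}\eta(n)e(-n\alpha)$ with unimodular $\eta(n)$, \emph{inside} the integral, so that $ZX^{s/4}\psi(X)^{-1}\ll\int_{\mathfrak m}|f_2(\alpha)^2f_4(\alpha)^s\mathcal E(\alpha)|\,d\alpha$, and applies Cauchy--Schwarz only after splitting $\mathfrak m$ into four regions (with a pruning parameter $Y=P_4^{3/2+\tau}\psi(X)^2$), pairing $|\mathcal E|^2$ with $|f_2^\ast|^4$ or $|f_2^\ast|^2$ and expanding the square into a bilinear sum over $n_1,n_2\in\mathcal Z$. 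The point is that $\int|f_2^\ast(\alpha)|^4e(-h\alpha)\,d\alpha$ over the pruned region gains a factor $Y^{-1}$ when $h\ne 0$ (via the explicit Gauss-sum evaluation of Lemma \ref{lemma22}), so the peaks of $f_2$ at small denominators are harmless: they simply do not correlate with $\mathcal E$ off the diagonal. This produces estimates of the shape $X^{(2s+9)/16+\varepsilon}Z^{1/2}+X^{s/4-\delta}\psi(X)^{-1}Z$, whose $Z$-linear term is absorbed into the left-hand side --- a shape that Bessel's inequality can never deliver, since it forces the full (and, as above, too large) minor-arc moment to appear as the coefficient of $Z$. Without this weighted mechanism your plan caps out at the exponents of \cite{FW}, so the proposal as written does not prove the theorem.
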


%We also improve the estimate (\ref{E3}) slightly.
%
%\begin{theorem}\label{theorem2}
%Let $E_3(X,\psi)$ be defined as above with $\psi(t)$ being a
%sedately increasing function. Then for each $\varepsilon>0$, one
%has
%\begin{align}E_3(X,\psi)\ll X^{3/8+\varepsilon}\psi(X)^2,\end{align}
%where the implied constants may depend on $\varepsilon$.
%\end{theorem}

We establish Theorem \ref{theorem} by means of the
Hardy-Littlewood method. In order to estimate the corresponding
exceptional sets effectively, we employ the method developed by
Wooley \cite{Wooley02,Wooley02Q}.

As usual, we write $e(z)$ for $e^{2\pi iz}$. Whenever
$\varepsilon$ appears in a statement, either implicitly or
explicitly, we assert that the statement holds for each
$\varepsilon>0$. Note that the "value" of $\varepsilon$ may
consequently change from statement to statement. We assume that
$X$ is a large positive number, and $\psi(t)$ is a sedately
increasing function.

 \vskip3mm

\section{Preparations}

Throughout this section, we assume that $X/2< n\le X$. For $k\in
\{2,4\}$, we define the exponential
sum\begin{align*}f_k(\alpha)=\sum_{1\le x\le P_k}e(\alpha
x^k),\end{align*}where $P_k=X^{1/k}$. We take $s$ to be either $3$
or $4$. By orthogonality, we
have\begin{align}\label{orth}R_s(n)=\int_0^1f_2(\alpha)^2f_4(\alpha)^se(-n\alpha)d\alpha.\end{align}
When $Q$ is a positive number, we define $\mathfrak{M}(Q)$ to be
the union of the intervals
\begin{align*}\mathfrak{M}_{Q}(q,a)=\{\alpha:\ |q\alpha-a|\le QX^{-1}\},\end{align*}
with $1\le a\le q\le Q$ and $(a,q)=1$. Whenever $Q\le X^{1/2}/2$,
the intervals $\mathfrak{M}_{Q}(q,a)$ are pairwise disjoint for
$1\le a\le q\le Q$ and $(a,q)=1$. Let $\nu$ be a sufficiently
small positive number, and let $R=P_4^{\nu}$. We take
$\mathfrak{M}=\mathfrak{M}(R)$ and
$\mathfrak{m}=(R/N,1+R/N]\setminus \mathfrak{M}$. Write
$$v_k(\beta)=\int_{0}^{P_k}e(\gamma^k\beta)d\gamma.$$
One has the estimate $$v_k(\beta) \ll P_k(1+X|\beta|)^{-1/k}.$$
For $\alpha\in
\mathfrak{M}_{X^{1/2}/2}(q,a)\subseteq\mathfrak{M}(X^{1/2}/2)$, we
define\begin{align}\label{fkast}f_k^{\ast}(\alpha)=q^{-1}S_k(q,a)v_k(\alpha-a/q).\end{align}
It follows from Theorem 4.1 \cite{V} that whenever $\alpha\in
\mathfrak{M}_{X^{1/2}/2}(q,a)$, one has
\begin{align}\label{difference}f_k(\alpha)-f_k^{\ast}(\alpha)\ll q^{1/2}(1+X|\alpha-a/q|)^{1/2}X^{\varepsilon}.\end{align}
We define the multiplicative function $w_k(q)$ by taking
\begin{align*}w_k(p^{uk+v})=\begin{cases}kp^{-u-1/2}, &\textrm{ when } u\ge 0 \textrm{ and } v=1,
\\p^{-u-1}, &\textrm{ when } u\ge 0 \textrm{ and } 2\le v\le k.\end{cases}\end{align*}
Note that $q^{-1/2}\le w_k(q) \ll q^{-1/k}$. Whenever $(a,q)=1$,
we have
\begin{align*}q^{-1}S_k(q,a)\ll w_k(q).\end{align*}
Therefore for $\alpha=a/q+\beta\in
\mathfrak{M}_{X^{1/2}/2}(q,a)\subseteq\mathfrak{M}(X^{1/2}/2)$,
one has\begin{align}\label{f-ast}f_k^{\ast}(\alpha)\ll
w_k(q)P_k(1+X|\beta|)^{-1/k} \ll
P_kq^{-1/k}(1+X|\beta|)^{-1/k}.\end{align}

The following conclusion is (4.1) in \cite{FW}.
\begin{lemma}\label{lemmaM}
One has
\begin{align*}\int_{\mathfrak{M}}f_2(\alpha)^2f_4(\alpha)^se(-n\alpha)d\alpha=
c_s
\Gamma(\frac{5}{4})^4\mathfrak{S}_s(n)n^{s/4}+O(n^{s/4-\kappa+\varepsilon}),\end{align*}
for a suitably small positive number $\kappa$.
\end{lemma}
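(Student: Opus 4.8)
The plan is to run the standard Hardy--Littlewood major arc analysis: replace $f_2^2f_4^s$ by $f_2^{\ast2}f_4^{\ast s}$ on $\mathfrak{M}$, factor the resulting integral into a singular series and a singular integral, complete both, and finally evaluate the completed singular integral. It is convenient to abbreviate $A(q,a)=q^{-2-s}S_2(q,a)^2S_4(q,a)^se(-na/q)$, to write $B(q)=\sum_{(a,q)=1}A(q,a)$ so that $\mathfrak{S}_s(n)=\sum_{q=1}^\infty B(q)$, and to set $\mathfrak{J}(n)=\int_{-\infty}^\infty v_2(\beta)^2v_4(\beta)^se(-n\beta)\,d\beta$ for the complete singular integral.

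First I would pass from $f_k$ to $f_k^{\ast}$. Since $\mathfrak{M}=\mathfrak{M}(R)\subseteq\mathfrak{M}(X^{1/2}/2)$, on an arc $\mathfrak{M}_R(q,a)$ one has $q\le R$ and $X|\alpha-a/q|\le R/q\le R$, so \eqref{difference} gives $f_k(\alpha)-f_k^{\ast}(\alpha)\ll RX^\varepsilon$. Writing $f_2^2f_4^s-f_2^{\ast2}f_4^{\ast s}$ as a telescoping sum in which each term carries one difference factor and, by the trivial bounds $f_k,f_k^{\ast}\ll P_k$, at most $P_2^2P_4^{s-1}$ from the surviving factors, and integrating against $\operatorname{meas}(\mathfrak{M})\ll R^2X^{-1}$, I expect the replacement error to be $\ll R^3X^{(s-1)/4+\varepsilon}$, which is admissible once $\nu$ is chosen small enough that $\kappa<\tfrac14-\tfrac34\nu$.

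Inserting \eqref{fkast} and putting $\alpha=a/q+\beta$, the integral of $f_2^{\ast2}f_4^{\ast s}e(-n\alpha)$ over $\mathfrak{M}$ factors as $\sum_{q\le R}B(q)\mathfrak{J}_q(n)$, where $\mathfrak{J}_q(n)=\int_{|\beta|\le RX^{-1}/q}v_2(\beta)^2v_4(\beta)^se(-n\beta)\,d\beta$ is a truncation of $\mathfrak{J}(n)$. Comparing with $\mathfrak{S}_s(n)\mathfrak{J}(n)$, the discrepancy splits into the singular series tail $\big(\sum_{q>R}B(q)\big)\mathfrak{J}(n)$ and the singular integral tails $\sum_{q\le R}B(q)\big(\mathfrak{J}(n)-\mathfrak{J}_q(n)\big)$. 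The function $g(q)=\sum_{(a,q)=1}|A(q,a)|$ is multiplicative, and from $q^{-1}S_k(q,a)\ll w_k(q)$ together with $w_2(q)\asymp q^{-1/2}$ one gets $g(p^j)\ll w_4(p^j)^s$, hence $\sum_{j\ge1}g(p^j)\ll p^{-s/2}$; since $s\ge3$ the singular series converges absolutely and $\sum_{q>R}|B(q)|\ll R^{1-s/2+\varepsilon}$ by Rankin's trick. For the second sum, the bound $v_k(\beta)\ll P_k(1+X|\beta|)^{-1/k}$ yields $\mathfrak{J}(n)\ll n^{s/4}$ and $\mathfrak{J}(n)-\mathfrak{J}_q(n)\ll n^{s/4}(q/R)^{s/4}$, so the sum is $\ll n^{s/4}R^{-s/4}\sum_{q\le R}g(q)q^{s/4}$; estimating the weighted partial sum shows this saves a positive power of $R$ in both cases $s=3$ and $s=4$, so the whole discrepancy is $\ll n^{s/4-\kappa+\varepsilon}$.

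It then remains to evaluate $\mathfrak{J}(n)$. Substituting $u=\gamma^k$ in each $v_k$ and integrating the resulting kernel over the simplex $\{u_i\ge0:\ u_1+\cdots+u_{s+2}=n\}$---which lies inside $[0,X]^{s+2}$ precisely because $n\le X$---produces the Dirichlet integral $\mathfrak{J}(n)=\tfrac{\pi}{4}\Gamma(\tfrac54)^s\Gamma(1+\tfrac s4)^{-1}n^{s/4}$, and the reflection and recurrence formulae for $\Gamma$ reduce this to $c_s\Gamma(\tfrac54)^4n^{s/4}$ with $c_3=\tfrac23\sqrt2$ and $c_4=\tfrac14\pi$. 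I expect the principal difficulty to lie in the completion step: because the arcs $\mathfrak{M}_R(q,a)$ have $q$-dependent widths, the singular integral is truncated at a level that shrinks with $q$, and showing that the attendant tails are negligible requires balancing their growth $(q/R)^{s/4}$ against the decay of $g(q)$ coming from the Gauss sum bounds---the quadratic sum $S_2$ contributing the exactly critical factor $p^jw_2(p^j)^2\asymp1$ and the biquadratic sums $S_4$ supplying the convergence.
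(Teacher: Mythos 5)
The paper offers no argument for this lemma at all: its entire ``proof'' is the sentence preceding the statement, which identifies it as (4.1) of Friedlander and Wooley \cite{FW}, so the major-arc asymptotic is imported wholesale from that paper. Your reconstruction is thus the only actual proof on the table, and it is the standard one and essentially sound. The replacement step is fine (indeed \eqref{difference} gives the sharper per-factor bound $q^{1/2}(1+X|\alpha-a/q|)^{1/2}\ll (q+R)^{1/2}\ll R^{1/2}$, so your $RX^{\varepsilon}$ is a harmless overestimate), and the error $R^{3}X^{(s-1)/4+\varepsilon}$ against a main term of size $X^{s/4}$ is admissible for small $\nu$ exactly as you say. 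The factorization $\sum_{q\le R}B(q)\mathfrak{J}_q(n)$, the observation that $qw_2(q)^2\asymp q^{\varepsilon-0}$ makes the $S_2$ factors exactly critical and the $S_4$ factors supply convergence for $s\ge3$, and the evaluation of the completed singular integral via the Dirichlet formula are all correct: with $\Gamma(\tfrac14)=4\Gamma(\tfrac54)$ and the reflection formula giving $\Gamma(\tfrac54)\Gamma(\tfrac74)=\tfrac{3\pi}{8\sqrt2}$, your $\tfrac{\pi}{4}\Gamma(\tfrac54)^s\Gamma(1+\tfrac s4)^{-1}$ does reduce to $c_3\Gamma(\tfrac54)^4$ and $c_4\Gamma(\tfrac54)^4$. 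What the citation buys the paper is brevity and an explicit admissible $\kappa$ from \cite{FW}; what your route buys is self-containedness.

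One quantitative claim is overstated, although the proof survives. For $s=4$ the singular-series tail is not $\ll R^{-1+\varepsilon}$: on fourth-power moduli $q=p^4$ one has $|S_2(p^4,a)|=p^2$ and $S_4(p^4,a)=p^3$, whence $g(p^4)\asymp p^{-s}$, so the weighted Euler factor $g(p^4)p^{4\sigma}\asymp p^{4\sigma-4}$ forces $\sigma<\tfrac34$ in your Rankin argument, and indeed the tail of $\sum_q g(q)$ over fourth powers exceeding $R$ is genuinely of order $R^{-3/4}$. The exponent $1-s/2$ is correct for $s=3$ but should read $-\tfrac34+\varepsilon$ for $s=4$. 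This is immaterial: any fixed power of $R=P_4^{\nu}$ saved yields the $O(n^{s/4-\kappa+\varepsilon})$ error term, and your completion of $\mathfrak{J}_q(n)$ to $\mathfrak{J}(n)$ --- where the same fourth-power moduli are the critical ones, contributing $\ll R^{1/4}$ to $\sum_{q\le R}g(q)q^{s/4}$ and hence a saving of $R^{-1/2}$ or $R^{-3/4}$ --- only ever claims a positive power saving, which is all that is needed.
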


The next result provides the value of the Gauss sum $S_2(q,a)$.
\begin{lemma}\label{lemma22}
The Gauss sum $S_2(q,a)$ satisfies the following properties.

(i) If $(2a,q)=1$, then
\begin{align*}S_2(q,a)=\Big(\frac{a}{q}\Big)S_2(q,1).\end{align*}
Here by $\Big(\frac{a}{q}\Big)$ we denote the Jacobi symbol.

(ii) If $q$ is odd,
then\begin{align*}S_2(q,1)=\begin{cases}q^{1/2},
 &\ \textrm{ if }\ q\equiv1\pmod{4},
 \\ iq^{1/2},
 &\ \textrm{ if }\ q\equiv3\pmod{4}.\end{cases}\end{align*}

(iii) If $(2,a)=1$, then\begin{align*}S_2(2^m,a)=\begin{cases}0,
 &\ \textrm{ if }\ m=1,
 \\ 2^{m/2}(1+i^a),
 &\ \textrm{ if }\ m \textrm{ is even},
  \\ 2^{(m+1)/2}e(a/8),
 &\ \textrm{ if }\ m>1 \textrm{ and } m \textrm{ is odd}.\end{cases}\end{align*}

(iv) If $(q_1,q_2)=1$, then
\begin{align*}S_2(q_1q_2,a_1q_2+a_2q_1)=S_2(q_1,a_1)S_2(q_2,a_2).\end{align*}
\end{lemma}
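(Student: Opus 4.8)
The statement collects the classical evaluations of the quadratic Gauss sum, so I would treat the four parts in the order (iv), (i), (iii), (ii), since the multiplicativity relation in (iv) feeds the reduction to prime powers used throughout. For (iv) the tool is the Chinese Remainder Theorem: when $(q_1,q_2)=1$, parametrise a complete residue system modulo $q_1q_2$ by $r=r_1q_2+r_2q_1$, with $r_1,r_2$ running over complete residue systems modulo $q_1,q_2$. Expanding $r^2$ and reducing modulo $q_1q_2$, the cross term carries a factor $q_1q_2$ and hence contributes an integer, so that $(a_1q_2+a_2q_1)r^2/(q_1q_2)\equiv a_1(r_1q_2)^2/q_1+a_2(r_2q_1)^2/q_2\pmod 1$. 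The double sum then factors as $S_2(q_1,a_1)S_2(q_2,a_2)$, using that $r_1\mapsto r_1q_2$ permutes the residues modulo $q_1$.

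For (i) I would reduce to odd prime powers via (iv) and the multiplicativity of the Jacobi symbol. For a prime $p$ with $(a,p)=1$, counting the solutions of $r^2\equiv t\pmod p$ expresses $S_2(p,a)$ in terms of the Gauss sum of the Legendre symbol, and the substitution $t\mapsto a^{-1}t$ produces the factor $\left(\frac ap\right)$. For higher powers I would establish the reduction $S_2(p^m,a)=p\,S_2(p^{m-2},a)$ for $m\ge 2$ by splitting $r=u+p^{m-1}v$, whereupon the sum over $v$ forces $p\mid u$; iterating gives $S_2(p^m,a)=p^{\lfloor m/2\rfloor}S_2(p^{m-2\lfloor m/2\rfloor},a)$, which yields (i) for prime powers and, via (iv), for all odd $q$.

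Part (iii) is a direct $2$-adic computation. The case $m=1$ gives $S_2(2,a)=1+(-1)^a=0$, and the cases $m=2,3$ are settled by listing the squares modulo $8$. For $m\ge 4$ a split $r=u+2^{m-1}v$ followed by a second such split produces the reduction $S_2(2^m,a)=2\,S_2(2^{m-2},a)$, and iterating down to the base cases $m=2,3$ delivers the two stated values $2^{m/2}(1+i^a)$ and $2^{(m+1)/2}e(a/8)$ according to the parity of $m$.

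The genuine difficulty lies in (ii). The magnitude $|S_2(q,1)|=q^{1/2}$ for odd $q$ follows routinely, for prime $p$ from $|g(\chi)|^2=p$ and in general from the prime-power reduction together with (iv); but the determination of the argument, namely that it is $1$ or $i$ according as $q\equiv1$ or $3\pmod 4$, is exactly Gauss's classical evaluation of the sign of the quadratic Gauss sum. I expect this sign determination to be the main obstacle, and I would settle it by one of the standard routes, for instance Poisson summation together with the functional equation, which pins the sign once the absolute value is known; the passage from prime $p$ to general odd $q$ then follows from (iv) combined with Jacobi-symbol reciprocity.
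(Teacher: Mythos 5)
Your proposal is correct, but it takes a genuinely different route from the paper for the simple reason that the paper offers no proof at all: its entire argument for Lemma \ref{lemma22} is the citation ``These properties can be found in Lemma 2 \cite{HT}'', treating the statement as a package of classical facts about quadratic Gauss sums. Your sketch reconstructs the standard derivation, and the steps check out: the CRT parametrisation for (iv) is exactly right; the split $r=u+p^{m-1}v$ does give $S_2(p^m,a)=p\,S_2(p^{m-2},a)$ for odd $p$ and $m\ge 2$; and for $p=2$ the second split is genuinely necessary, since a single split only doubles up the sum over $u$ modulo $2^{m-1}$ while leaving the modulus $2^m$ in the exponential, and it is the second split that forces $2\mid w$ and lowers the modulus, yielding $S_2(2^m,a)=2\,S_2(2^{m-2},a)$ for $m\ge 4$. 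You also correctly isolate the one deep point, the sign in (ii). A simplification worth noting there: Dirichlet's Poisson-summation argument evaluates $S_2(q,1)=\frac{1+i}{2}\bigl(1+i^{-q}\bigr)q^{1/2}$ for \emph{every} $q\ge 1$ in one stroke, which gives (ii) for all odd $q$ directly --- with no reduction to primes and no appeal to Jacobi reciprocity --- and as a bonus recovers your base cases $S_2(4,1)$ and $S_2(8,1)$ in (iii); this would streamline your final paragraph. As for what each approach buys: your self-contained treatment makes the note independent of the literature, whereas the paper's citation is the economical choice, appropriate because the lemma is used only for the crude consequences that $|S_2(q,a)|$ is independent of $a$ when $(a,q)=1$ and satisfies $|S_2(q,a)|\le (2q)^{1/2}$, which feed the estimates in Lemmata \ref{lemma32} and \ref{lemma33}; none of the finer structure (the exact argument in (ii), the value $e(a/8)$ in (iii)) is ever exploited.
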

\begin{proof}These properties can be found in Lemma 2 \cite{HT}.\end{proof}

 \vskip3mm

\section{The Proof of Theorem \ref{theorem}}
Let $\tau$ be a fixed sufficiently small positive number. Set
$Y=P_4^{3/2+\tau}\psi(X)^2$. We define
$\mathfrak{m}_1=\mathfrak{m}\setminus \mathfrak{M}(X^{1/2}/2)$,
$\mathfrak{m}_2=\mathfrak{M}(X^{1/2}/2)\setminus \mathfrak{M}(Y)$,
$\mathfrak{m}_3=\mathfrak{M}(Y)\setminus \mathfrak{M}(P_4)$ and
$\mathfrak{m}_4=\mathfrak{M}(P_4)\setminus \mathfrak{M}$. Let
$\eta(n)$ be sequence of complex numbers satisfying $|\eta(n)|=1$.
Let $\mathcal{Z}$ be a subset of $\{n\in \N:\ X/2< n\le X\}$. We
abbreviate card($\mathcal{Z}$) to $Z$. Then we introduce the
exponential sum $\mathcal{E}(\alpha)$ by
\begin{align*}\mathcal{E}(\alpha)=\sum_{n\in \mathcal{Z}}
\eta(n)e(-n\alpha).\end{align*}For $1\le j\le 4$, we define
\begin{align}\label{Ij}\mathcal{I}_j=
\int_{\mathfrak{m}_j}\big|f_2(\alpha)^2f_4(\alpha)^s\mathcal{E}(\alpha)\big|d\alpha.\end{align}

\begin{lemma}\label{lemma31}Let $\mathcal{I}_1$ be defined in
(\ref{Ij}). Then we have
\begin{align*}
\mathcal{I}_1\ll
P_4^{4-\frac{1}{4}+\frac{s-3}{2}+\varepsilon}Z^{1/2}+P_4^{s-\frac{1}{4}+\varepsilon}Z.\end{align*}
\end{lemma}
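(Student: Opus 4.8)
The plan is to exploit two features of the minor arcs $\mathfrak{m}_1=\mathfrak{m}\setminus\mathfrak{M}(X^{1/2}/2)$: the pointwise Weyl bounds available there, and a mean value estimate for $\mathcal{E}$ that cleanly separates a diagonal from an off-diagonal contribution. Since $\alpha\in\mathfrak{m}_1$ lies outside $\mathfrak{M}(X^{1/2}/2)$, Dirichlet's theorem forces the denominator of its rational approximation above $X^{1/2}/2$, so Weyl's inequality (see \cite{V}) yields
\[
\sup_{\alpha\in\mathfrak{m}_1}|f_4(\alpha)|\ll P_4^{3/4+\varepsilon},\qquad \sup_{\alpha\in\mathfrak{m}_1}|f_2(\alpha)|\ll P_2^{1/2+\varepsilon}=P_4^{1+\varepsilon}.
\]
On the arithmetic side, for $k\in\{2,4\}$ I would expand $\int_0^1|f_k(\alpha)|^2|\mathcal{E}(\alpha)|^2\,d\alpha$ and detect the equation $x^k-y^k=n_1-n_2$. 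The diagonal $n_1=n_2$ forces $x=y$ and contributes $\ll ZP_k$, while for $n_1\ne n_2$ the divisor bound for representations of a nonzero integer as a difference of $k$-th powers gives $O(P_4^\varepsilon)$ solutions per pair; using $|\eta(n)|=1$ this yields
\[
\int_0^1|f_k(\alpha)|^2|\mathcal{E}(\alpha)|^2\,d\alpha\ll ZP_k+Z^2P_4^\varepsilon.
\]
Taking square roots produces exactly the shape $Z^{1/2}P_k^{1/2}+Z$ that will generate the two terms of the lemma.

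The engine of the argument is the sharp fourth moment $\int_0^1|f_2|^2|f_4|^4\,d\alpha\ll P_4^{4+\varepsilon}$. I would prove this by writing the integral as $\sum_h D(h)Q(h)$, where $D(h)$ counts $x_1^2-x_2^2=h$ and $Q(h)$ counts $y_1^4+y_2^4-y_3^4-y_4^4=h$. The term $h=0$ gives $D(0)\int_0^1|f_4|^4\ll P_2\cdot P_4^{2+\varepsilon}=P_4^{4+\varepsilon}$, while for $h\ne 0$ the factorisation $x_1^2-x_2^2=(x_1-x_2)(x_1+x_2)$ gives $D(h)\ll P_4^\varepsilon$, so that $\sum_{h\ne 0}D(h)Q(h)\ll P_4^\varepsilon\sum_h Q(h)=P_4^{4+\varepsilon}$. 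It is essential to argue this way rather than through Cauchy--Schwarz and Hua's inequality: the latter only supplies $\int_0^1|f_4|^8\ll P_4^{5+\varepsilon}$ and loses a factor $P_4^{1/2}$. Feeding the sharp bound into the pointwise estimate for $f_4$ then gives the minor-arc sixth moment
\[
\int_{\mathfrak{m}_1}|f_2|^2|f_4|^6\,d\alpha\le\Big(\sup_{\mathfrak{m}_1}|f_4|^2\Big)\int_0^1|f_2|^2|f_4|^4\,d\alpha\ll P_4^{3/2}\cdot P_4^{4+\varepsilon}=P_4^{11/2+\varepsilon}.
\]

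For $s=3$ I would pair $\mathcal{E}$ with one copy of $f_2$: by Cauchy--Schwarz,
\[
\mathcal{I}_1\le\Big(\int_{\mathfrak{m}_1}|f_2|^2|f_4|^6\,d\alpha\Big)^{1/2}\Big(\int_0^1|f_2|^2|\mathcal{E}|^2\,d\alpha\Big)^{1/2}\ll P_4^{11/4}\big(Z^{1/2}P_4+Z\big),
\]
which is $P_4^{15/4+\varepsilon}Z^{1/2}+P_4^{11/4+\varepsilon}Z$, as required. For $s=4$ the correct pairing is with one copy of $f_4$, using $\int_0^1|f_4|^2|\mathcal{E}|^2\ll ZP_4+Z^2$; the attendant companion integral $\int_{\mathfrak{m}_1}|f_2|^4|f_4|^6$ is the delicate point, since a bare pointwise bound on six copies of $f_4$ overshoots by $P_4$. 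The resolution is to spend the Weyl bound for $f_2$ instead, reducing to the sixth moment already in hand:
\[
\int_{\mathfrak{m}_1}|f_2|^4|f_4|^6\,d\alpha\le\Big(\sup_{\mathfrak{m}_1}|f_2|^2\Big)\int_{\mathfrak{m}_1}|f_2|^2|f_4|^6\,d\alpha\ll P_4^{2}\cdot P_4^{11/2+\varepsilon}=P_4^{15/2+\varepsilon}.
\]
Cauchy--Schwarz then gives $\mathcal{I}_1\ll P_4^{15/4}(Z^{1/2}P_4^{1/2}+Z)=P_4^{17/4+\varepsilon}Z^{1/2}+P_4^{15/4+\varepsilon}Z$. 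I expect the main obstacle to be precisely this tension in the case $s=4$: every estimate must be kept sharp to the last $P_4^{1/2}$, and the argument only closes because the sharp fourth moment above lets one route the sixth moment through the Weyl bound for $f_2$ rather than through a lossy high moment of $f_4$.
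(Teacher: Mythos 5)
Your overall architecture matches the paper's: Weyl's inequality on $\mathfrak{m}_1$, Cauchy--Schwarz to split off $\mathcal{E}$, Hua-type moments of $f_4$, and the diagonal/off-diagonal divisor-bound count giving $\int_0^1|f_k(\alpha)|^2|\mathcal{E}(\alpha)|^2\,d\alpha\ll ZP_k+Z^2P_4^\varepsilon$ (all of which are fine). But the pivotal intermediate step is broken: Weyl's inequality does \emph{not} give $\sup_{\alpha\in\mathfrak{m}_1}|f_4(\alpha)|\ll P_4^{3/4+\varepsilon}$. For $k=4$ it gives $f_4(\alpha)\ll P_4^{1+\varepsilon}\bigl(q^{-1}+P_4^{-1}+qP_4^{-4}\bigr)^{1/8}$, and on $\mathfrak{m}_1$, where the Dirichlet denominator satisfies $q\asymp X^{1/2}=P_4^2$, the middle term $P_4^{-1}$ dominates the bracket, so Weyl yields only $P_4^{7/8+\varepsilon}$; your exponent $3/4$ would require the bracket to be $\asymp P_4^{-2}$, i.e.\ dropping the unavoidable $P_4^{-1}$ term, and no standard pointwise estimate supplies $3/4$ here. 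Since your key bound $\int_{\mathfrak{m}_1}|f_2|^2|f_4|^6\,d\alpha\ll P_4^{11/2+\varepsilon}$ is derived as $\bigl(\sup_{\mathfrak{m}_1}|f_4|^2\bigr)\int_0^1|f_2|^2|f_4|^4\,d\alpha$, the corrected exponent turns it into $P_4^{23/4+\varepsilon}$, and both your $s=3$ and $s=4$ conclusions (and the $s=4$ chain built on top of this integral) then overshoot the lemma by a factor $P_4^{1/8}$.

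Moreover, your stated reason for taking this detour --- that the Hua route ``loses a factor $P_4^{1/2}$'' --- is a misdiagnosis. Schwarz's inequality between Hua's bounds $\int_0^1|f_4|^4\,d\alpha\ll P_4^{2+\varepsilon}$ and $\int_0^1|f_4|^8\,d\alpha\ll P_4^{5+\varepsilon}$ gives $\int_0^1|f_4|^6\,d\alpha\ll P_4^{7/2+\varepsilon}$, with no loss, and then the (valid) square Weyl bound alone yields
\begin{align*}
\int_{\mathfrak{m}_1}|f_2|^2|f_4|^6\,d\alpha\ \le\ \Bigl(\sup_{\alpha\in\mathfrak{m}_1}|f_2(\alpha)|^2\Bigr)\int_0^1|f_4|^6\,d\alpha\ \ll\ P_4^{2+\varepsilon}\cdot P_4^{7/2+\varepsilon}\ =\ P_4^{11/2+\varepsilon},
\end{align*}
recovering exactly the estimate you wanted without any sup bound on $f_4$. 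This is in substance the paper's proof: it pulls \emph{both} copies of $f_2$ out by Weyl and applies Cauchy--Schwarz so that $\mathcal{E}$ is paired with nothing when $s=3$ and with one copy of $f_4$ when $s=4$, using $\int_0^1|f_4|^2|\mathcal{E}|^2\,d\alpha\ll P_4Z+P_4^\varepsilon Z^2$. With this substitution your pairings and mean values for $\mathcal{E}$ close the argument verbatim; your sharp fourth moment $\int_0^1|f_2|^2|f_4|^4\,d\alpha\ll P_4^{4+\varepsilon}$ is correct but unnecessary, and the false $f_4$ sup bound is the only broken link.
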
\begin{proof}
For any $\alpha\in \mathfrak{m}_1$, there exist $a$ and $q$ with
$1\le a\le q\le 2X^{1/2}$ and $(a,q)=1$ such that $|q\alpha-a|\le
X^{-1/2}/2$. Since $\alpha\in \mathfrak{m}_1$, we conclude that
$q>X^{1/2}/2$. It follows from Weyl's inequality (Lemma 2.4
\cite{V}) that
$$f_2(\alpha)\ll P_2^{1/2+\varepsilon}\ \textrm{ for }\ \alpha\in \mathfrak{m}_1.$$
Thus we have \begin{align*}\mathcal{I}_1 \ll & P_2^{1+\varepsilon}
\int_{\mathfrak{m}_1}\big|f_4(\alpha)^s\mathcal{E}(\alpha)\big|d\alpha
\\ \ll & P_2^{1+\varepsilon}
\big(\int_{0}^1\big|f_4(\alpha)^6\big|d\alpha\big)^{\frac{1}{2}}
\big(\int_{0}^1\big|f_4(\alpha)^{2(s-3)}\mathcal{E}(\alpha)^2\big|d\alpha\big)^{\frac{1}{2}}.\end{align*}
By Hua's inequality (Lemma 2.5 \cite{V}) and Schwartz's
inequality,
$$\int_{0}^1\big|f_4(\alpha)^6\big|d\alpha\ll \Big(\int_{0}^1\big|f_4(\alpha)^4\big|d\alpha\Big)^{1/2}
\Big(\int_{0}^1\big|f_4(\alpha)^8\big|d\alpha\Big)^{1/2}\ll
P_4^{7/2+\varepsilon}.$$ When $s=4$, one has the bound
$\int_{0}^1\big|f_4(\alpha)^{2(s-3)}\mathcal{E}(\alpha)^2\big|d\alpha\ll
P_4Z+P_4^{\varepsilon}Z^2$. Then we can conclude that
\begin{align}\label{I1}
\mathcal{I}_1\ll P_4^{4-\frac{1}{4}+\frac{s-3}{2}+\varepsilon}Z^{1/2}+P_4^{s-\frac{1}{4}+\varepsilon}Z.\end{align}
Indeed when $s=3$, the estimate (\ref{I1}) holds with
$P_4^{s-\frac{1}{4}+\varepsilon}Z$ omitted.\end{proof}

\begin{lemma}\label{lemma32}Let $\mathcal{I}_2$ be given by
(\ref{Ij}). Then one has
\begin{align*}\mathcal{J}_2\ \ll P_4^{4-\frac{1}{4}+\frac{s-3}{2}+\varepsilon}Z^{1/2}
+P_4^{s-\tau/2+\varepsilon}\psi(X)^{-1}Z.\end{align*}
\end{lemma}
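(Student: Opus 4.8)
The plan is to follow the template of Lemma \ref{lemma31}, but to keep $f_2$ inside a mean value rather than estimating it pointwise: on $\mathfrak{m}_2$ the sum $f_2$ is no longer governed by Weyl's inequality, and — crucially — a clean factor $Z$ (rather than $Z^{1/2}$) must be produced even when $s=3$, which the Cauchy--Schwarz arrangement of Lemma \ref{lemma31} cannot supply. First I would isolate $\mathcal{E}$ in an $L^2$ factor by Cauchy's inequality,
\begin{align*}\mathcal{I}_2\le\Big(\int_{\mathfrak{m}_2}|f_2(\alpha)|^2|f_4(\alpha)|^s\,d\alpha\Big)^{1/2}\Big(\int_{\mathfrak{m}_2}|f_2(\alpha)|^2|f_4(\alpha)|^s|\mathcal{E}(\alpha)|^2\,d\alpha\Big)^{1/2}.\end{align*}
Writing $M=\int_{\mathfrak{m}_2}|f_2|^2|f_4|^s\,d\alpha$, the first factor is $M^{1/2}$, and $M$ is a classical mean value: by Hua's inequality together with the elementary bounds $\int_0^1|f_2|^4\,d\alpha\ll P_4^{4+\varepsilon}$ and the divisor estimate for the number of solutions of $x_1^2-x_2^2=z_1^4-z_2^4$, one gets $M\ll P_4^{4-1/4+(s-3)/2+\varepsilon}$ (indeed with room to spare).

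For the second factor I would expand $|\mathcal{E}(\alpha)|^2=\sum_{m,n\in\mathcal{Z}}\eta(m)\overline{\eta(n)}e((n-m)\alpha)$ and separate the diagonal from the off-diagonal. The diagonal $m=n$ contributes $Z\,M$; combining the resulting $M^{1/2}(Z M)^{1/2}=M Z^{1/2}$ with the bound on $M$ reproduces exactly the first asserted term $\ll P_4^{4-1/4+(s-3)/2+\varepsilon}Z^{1/2}$. The off-diagonal $m\neq n$ contributes at most $Z^2 B$, where
\begin{align*}B=\sup_{\ell\neq 0}\Big|\int_{\mathfrak{m}_2}|f_2(\alpha)|^2|f_4(\alpha)|^s e(-\ell\alpha)\,d\alpha\Big|.\end{align*}
Hence $\mathcal{I}_2\ll M Z^{1/2}+M^{1/2}B^{1/2}Z$, and matching the second piece against the target forces the requirement $B\ll P_4^{2s-\tau+\varepsilon}\psi(X)^{-2}M^{-1}$, that is, $B$ must beat the untwisted mean value $M$ by a positive power of $P_4$ together with the factor $\psi(X)^2$.

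Establishing this bound for the twisted, $\mathfrak{m}_2$-restricted mean value is the step I expect to be the main obstacle, and it is exactly where the excision of $\mathfrak{M}(Y)$ with $Y=P_4^{3/2+\tau}\psi(X)^2$ is used. On $\mathfrak{m}_2$ one has $q(1+X|\beta|)\gg Y$ for the approximation $\alpha=a/q+\beta$, so by \eqref{fkast} and \eqref{f-ast} the principal parts $f_k^{\ast}(\alpha)\ll P_k\big(q(1+X|\beta|)\big)^{-1/k}$ are genuinely small, and the arcs $\mathfrak{M}(Y)$ that have been removed are precisely those carrying the singular-series-type main term of such integrals. To exploit this I would insert $f_k=f_k^{\ast}+(f_k-f_k^{\ast})$ from \eqref{fkast}--\eqref{difference}: in the all-principal-part contribution the sum over $a,q$ and the integration in $\beta$ can be carried out explicitly, the constraint $T=q(1+X|\beta|)\ge Y$ supplying the decay in $\psi(X)^{2}P_4^{\tau}$, while every term retaining a difference $f_k-f_k^{\ast}$ is absorbed by the generically weaker first bound via \eqref{difference} and Hua's inequality. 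For the odd case $s=3$ I would first write $|f_4|^3\le P_4|f_4|^2$, so that $|f_2|^2|f_4|^2=|f_2 f_4|^2$ becomes an honest counting integral to which the twist $e(-\ell\alpha)$ with $\ell\neq0$ can be applied. Assembling the diagonal and off-diagonal estimates then yields the stated bound for $\mathcal{I}_2$.
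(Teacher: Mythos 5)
There is a genuine gap, and it sits exactly at the point you flagged as the main obstacle: the bound $B\ll P_4^{2s-\tau+\varepsilon}\psi(X)^{-2}M^{-1}$ for your twisted integral is not attainable with only \emph{two} powers of $f_2$ in the twisted factor. The saving from excising $\mathfrak{M}(Y)$ requires \emph{four} powers of $f_2^{\ast}$. Indeed, for the square of the principal part one has $\sum_{(a,q)=1}q^{-2}|S_2(q,a)|^2\asymp 1$ for each $q$, so the mass of $\int_{\mathfrak{m}_2}|f_2^{\ast}(\alpha)|^2\,d\alpha$ lives at $q$ near $X^{1/2}$, far beyond $Y$; removing $\mathfrak{M}(Y)$ saves nothing and this integral is genuinely of size $P_4^{2}$. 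Now take $s=4$ and insert $f_2=f_2^{\ast}+(f_2-f_2^{\ast})$ into your $B$: after expanding $|f_4|^4$, the tuples with $x_1^4+x_2^4-x_3^4-x_4^4+\ell=0$ (at most $P_4^{2+\varepsilon}$ of them, by Hua) carry no twist and contribute up to $P_4^{2+\varepsilon}\cdot\int_{\mathfrak{m}_2}|f_2^{\ast}|^2\ll P_4^{4+\varepsilon}$, while the error pieces force absolute values via (\ref{difference}) and give $\int_{\mathfrak{m}_2}|f_2-f_2^{\ast}|^2|f_4|^4\ll P_2^{1+\varepsilon}P_4^{2+\varepsilon}= P_4^{4+\varepsilon}$, again untwisted. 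So $B\ll P_4^{4+\varepsilon}$ is the ceiling of this arrangement, whereas your own accounting demands $B\ll P_4^{4-\tau}\psi(X)^{-2}$ (since $M\asymp P_4^{4}$): you are short by precisely the factor $P_4^{\tau}\psi(X)^{2}$ that the lemma exists to win. A second, independent defect: for $s=3$ the device $|f_4|^3\le P_4|f_4|^2$ cannot be used \emph{inside} the twisted integral, since pointwise domination of a nonnegative integrand only yields $|\int g\,e(-\ell\alpha)\,d\alpha|\le\int g\,d\alpha$ and destroys the oscillation; applied before expansion it removes the twist from $B$ altogether. Also note that because your error terms arise after expanding $|\mathcal{E}|^2$, they come weighted by $Z^2$ (hence give a $Z$ term after the square root) and cannot be ``absorbed into the first bound'', which carries $Z^{1/2}$.

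The paper's proof is engineered around exactly these two points. It first splits $|f_2|^2\ll|f_2-f_2^{\ast}|^2+|f_2^{\ast}|^2$ \emph{before} any Cauchy--Schwarz, so the approximation error meets $\mathcal{E}$ only linearly and is disposed of by the Lemma \ref{lemma31} argument, yielding the harmless $P_4^{s-1/4+\varepsilon}Z$. It then applies Cauchy--Schwarz \emph{asymmetrically}, pairing $\big(\int_{\mathfrak{m}_2}|f_4|^6\big)^{1/2}\ll P_4^{7/4+\varepsilon}$ against $\mathcal{J}=\int_{\mathfrak{m}_2}|f_2^{\ast}|^4|f_4|^{2(s-3)}|\mathcal{E}|^2\,d\alpha$, so that all four powers of $f_2^{\ast}$ land in the twisted mean value (and, for $s=3$, no odd power of $|f_4|$ survives there, which neutralises your second problem). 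The key estimate (\ref{diandof}) then works because $|S_2(q,a)|=|S_2(q,1)|$ (Lemma \ref{lemma22}) makes the $a$-sum a Ramanujan-type sum bounded by $4q^2(q,h)$, and with four powers the weight $q^{-2}(1+X|\beta|)^{-2}$ is summable, so the defining condition of $\mathfrak{m}_2$ (that $q>Y$ or $Xq|\beta|>Y$) extracts the crucial factor $Y^{-1}$ for $h\neq0$. Your diagonal term and your bound $M\ll P_4^{4-1/4+(s-3)/2+\varepsilon}$ are fine, but without relocating all four powers of $f_2^{\ast}$ into the twisted factor the off-diagonal estimate fails, and the proof does not close.
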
\begin{proof}We introduce
$$\mathcal{J}_1=
\int_{\mathfrak{m}_2}\big|\big(f_2(\alpha)-f_2^\ast(\alpha)\big)^2f_4(\alpha)^s\mathcal{E}(\alpha)\big|d\alpha$$
and
$$\mathcal{J}_2=
\int_{\mathfrak{m}_2}\big|f_2^\ast(\alpha)^2f_4(\alpha)^s\mathcal{E}(\alpha)\big|d\alpha.$$
Note that $|f_2(\alpha)|^2\ll |f_2(\alpha)-f_2^\ast(\alpha)|^2+
|f_2^\ast(\alpha)|^2$, where $f_2^\ast(\alpha)$ is defined in
(\ref{fkast}). Then one has\begin{align}\label{J12}\mathcal{I}_2\
\ll\ \mathcal{J}_1+\mathcal{J}_2.\end{align} In view of
(\ref{difference}), we know $f_2(\alpha)-f_2^\ast(\alpha)\ll
P_2^{1/2+\varepsilon}$ for $\alpha\in\mathfrak{m}_2$. The argument
leading to (\ref{I1}) also implies
\begin{align}\label{J1}\mathcal{J}_1\ll P_4^{4-\frac{1}{4}+\frac{s-3}{2}+\varepsilon}Z^{1/2}+P_4^{s-\frac{1}{4}+\varepsilon}Z.\end{align}
One has, by Schwartz's inequality, that
\begin{align*}\mathcal{J}_2\le&\ \Big(
\int_{\mathfrak{m}_2}\big|f_4(\alpha)^6\big|d\alpha\Big)^{1/2}\mathcal{J}^{1/2}
\ll\ P_4^{7/4+\varepsilon}\mathcal{J}^{1/2},\end{align*} where
$\mathcal{J}$ is defined
as\begin{align*}\mathcal{J}=\int_{\mathfrak{m}_2}\big|f_2^\ast(\alpha)^4f_4(\alpha)^{2(s-3)}\mathcal{E}(\alpha)^2\big|d\alpha
.\end{align*} In order to handle $\mathcal{J}$, we need the
following estimate
\begin{align}\label{diandof}\int_{\mathfrak{m}_2}\big|f_2^\ast(\alpha)^4\big|e(-h\alpha)d\alpha
=\begin{cases}O(P_4^{4+\varepsilon}Y^{-1}), &\textrm{ when }
0<|h|\le 2X,
\\ O(P_4^{4+\varepsilon}), &\textrm{ when } h=0.\end{cases}\end{align}
Recalling the definition of $f_2^\ast(\alpha)$, we conclude that
\begin{align*}&\int_{\mathfrak{m}_2}\big|f_2^\ast(\alpha)^4\big|e(-h\alpha)d\alpha
\\=&{\sum}^\ast_{q\le X^{1/2}/2}{\int}^\ast_{|\beta|\le
\frac{1}{2qX^{1/2}}} q^{-4}\Big(\sum_{\substack{a=1\\
(a,q)=1}}^{q}|S_2(q,a)|^4e(-ha/q)\Big)|v_2(\beta)|^4e(-h\beta)d\beta,\end{align*}
where the notations ${\sum}^\ast$ and ${\int}^\ast$ mean either
$q>Y$ or $Xq|\beta|>Y$. Whenever $(a,q)=1$, one has by Lemma
\ref{lemma22} that $$|S_2(q,a)|=|S_2(q,1)|\le (2q)^{1/2}.$$ We
obtain
\begin{align*}
\Big|\sum_{\substack{a=1\\ (a,q)=1}}^{q}|S_2(q,a)|^4e(-ha/q)\Big|=
& |S_2(q,1)|^4 \Big|\sum_{\substack{a=1\\ (a,q)=1}}^{q}
e(-ha/q)\Big|
\\ \le & 4q^2 \Big|\sum_{\substack{a=1\\ (a,q)=1}}^{q}
e(-ha/q)\Big|\le 4q^2(q,h),\end{align*}whence
\begin{align*}\int_{\mathfrak{m}_2}\big|f_2^\ast(\alpha)^4\big|e(-h\alpha)d\alpha
\ll P_2^4{\sum}^\ast_{q\le X^{1/2}/2}{\int}^\ast_{|\beta|\le
\frac{1}{2qX^{1/2}}}
\frac{q^{-2}(q,h)}{(1+X|\beta|)^{2}}\,d\beta.\end{align*} When
$h=0$, we have
\begin{align*}\int_{\mathfrak{m}_2}\big|f_2^\ast(\alpha)^4\big|e(-h\alpha)d\alpha
\ll & P_2^4\sum_{q\le X^{1/2}/2}\int_{|\beta|\le
\frac{1}{2qX^{1/2}}} q^{-1}(1+X|\beta|)^{-2}d\beta
\\ \ll &
P_2^{4}X^{-1}\log X.\end{align*} When $h\not=0$, we get
\begin{align*}\int_{\mathfrak{m}_2}\big|f_2^\ast(\alpha)^4\big|e(-h\alpha)d\alpha
\ll &\ P_2^4Y^{-1}\sum_{q\le X^{1/2}/2}\int_{|\beta|\le
\frac{1}{2qX^{1/2}}}\frac{ q^{-1}(q,h)}{1+X|\beta|}\,d\beta
\\ \ll &\ P_2^{4}Y^{-1}X^{-1}(\log X)\sum_{q\le X^{1/2}/2}q^{-1}(q,h)
\\ \ll &\ P_2^{4}Y^{-1}X^{-1+\varepsilon}.\end{align*}
The conclusion (\ref{diandof}) is established. Now we are able to
estimate $\mathcal{J}$. When $s=4$,
\begin{align*}\mathcal{J}=\sum_{\substack{1\le x_1,x_2\le P_4 \\ n_1,n_2\in
\mathcal{Z}}}\eta(n_1)\overline{\eta(n_2)}
\int_{\mathfrak{m}_2}\big|f_2^\ast(\alpha)^4\big|e(-(x_1^4-x_2^4+n_1-n_2)\alpha)d\alpha.\end{align*}
On applying (\ref{diandof}), we can deduce that
\begin{align*}\mathcal{J}\ll& \sum_{\substack{1\le x_1,x_2\le P_4,\ n_1,n_2\in
\mathcal{Z}\\
x_1^4-x_2^4+n_1-n_2\not=0}}P_4^{4+\varepsilon}Y^{-1}+\sum_{\substack{1\le
x_1,x_2\le P_4,\
n_1,n_2\in \mathcal{Z}\\
x_1^4-x_2^4+n_1-n_2=0}}P_4^{4+\varepsilon}
\\ \ll &\ P_4^{6+\varepsilon}Z^2Y^{-1}+P_4^{4+\varepsilon}Z^2+P_4^{5+\varepsilon}Z.\end{align*}
Substituting $Y=P_4^{3/2+\tau}\psi(X)^2$, we finally obtain
$$\mathcal{J}\ll
P_4^{4+1/2-\tau+\varepsilon}\psi(X)^{-2}Z^2+P_4^{5+\varepsilon}Z,$$
whence
\begin{align*}\mathcal{J}_2\ \ll\
P_4^{4-\tau/2+\varepsilon}\psi(X)^{-1}Z+P_4^{4+1/4+\varepsilon}Z^{1/2}.\end{align*}
Similarly, when $s=3$, one has
$$\mathcal{J}\ll
P_4^{5/2-\tau+\varepsilon}\psi(X)^{-2}Z^2+P_4^{4+\varepsilon}Z$$
whence
\begin{align*}\mathcal{J}_2\ \ll\
P_4^{3-\tau/2+\varepsilon}\psi(X)^{-1}Z+P_4^{4-1/4+\varepsilon}Z^{1/2}.\end{align*}
Therefore, we conclude that
\begin{align}\label{J2}\mathcal{J}_2\ \ll P_4^{4-\frac{1}{4}+\frac{s-3}{2}+\varepsilon}Z^{1/2}
+P_4^{s-\tau/2+\varepsilon}\psi(X)^{-1}Z.\end{align} Combining
(\ref{J12}), (\ref{J1}) and (\ref{J2}), we conclude that
\begin{align}\label{I2}\mathcal{I}_2\ \ll\
P_4^{4-\frac{1}{4}+\frac{s-3}{2}+\varepsilon}Z^{1/2}+P_4^{s-\tau/2+\varepsilon}\psi(X)^{-1}Z.\end{align}
We complete the proof.\end{proof}

\begin{lemma}\label{lemma33}Let $\mathcal{I}_3$ be defined in
(\ref{Ij}). Then we have
\begin{align*}\mathcal{I}_3 \ll P_4^{4-\frac{1}{4}+\frac{s-3}{2}+\varepsilon}Z^{1/2}+P_4^{s-\tau+\varepsilon}\psi(X)^{-1}Z
.\end{align*}
\end{lemma}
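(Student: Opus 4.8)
The plan is to run the argument of Lemma~\ref{lemma32} on the thinner annulus $\mathfrak{m}_3$, the one genuinely new feature being that the biquadratic Weyl sum is small there. First I would split $|f_2|^2 \ll |f_2-f_2^*|^2 + |f_2^*|^2$ exactly as in (\ref{J12}), so that $\mathcal{I}_3 \ll \mathcal{K}_1 + \mathcal{K}_2$ with $\mathcal{K}_1 = \int_{\mathfrak{m}_3}|(f_2-f_2^*)^2 f_4^s\mathcal{E}|$ and $\mathcal{K}_2 = \int_{\mathfrak{m}_3}|f_2^{*2}f_4^s\mathcal{E}|$. Since $\mathfrak{m}_3 \subseteq \mathfrak{M}(Y)$ forces $q \le Y$ and $X|\alpha - a/q| \le Y/q$, the estimate (\ref{difference}) yields the sharpened bound $f_2(\alpha)-f_2^*(\alpha) \ll Y^{1/2}X^\varepsilon$; feeding this into the Hua--Schwarz computation that led to (\ref{I1}) gives $\mathcal{K}_1 \ll P_4^{13/4+\tau+\varepsilon}\psi(X)^2 Z^{1/2}$ when $s=3$ and an analogous bound when $s=4$, in both cases comfortably below the two terms claimed for $\mathcal{I}_3$. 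Everything therefore reduces to $\mathcal{K}_2$, whose diagonal part will reproduce the term $P_4^{4-\frac14+\frac{s-3}2+\varepsilon}Z^{1/2}$ just as in Lemma~\ref{lemma32}.

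The crucial point for $\mathcal{K}_2$ is that $\mathfrak{m}_3$ omits the central arcs $\mathfrak{M}(P_4)$: for $\alpha \in \mathfrak{M}_Y(q,a)\setminus\mathfrak{M}(P_4)$ one has $q(1+X|\alpha-a/q|) \gg P_4$, whence by (\ref{f-ast}) the model sum obeys $f_4^*(\alpha) \ll P_4^{3/4}$ uniformly on $\mathfrak{m}_3$. Using $q^{-1}S_4(q,a) \ll w_4(q) \ll q^{-1/4}$ together with $v_4(\beta) \ll P_4(1+X|\beta|)^{-1/4}$ and summing over the arcs gives $\int_{\mathfrak{m}_3}|f_4^*|^6 \ll P_4^2 Y^{1/2} \ll P_4^{11/4+\varepsilon}$, a genuine saving over the trivial $\int_0^1|f_4|^6 \ll P_4^{7/2+\varepsilon}$ that Lemma~\ref{lemma32
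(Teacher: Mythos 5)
Your opening split $\mathcal{I}_3\ll\mathcal{K}_1+\mathcal{K}_2$ matches the paper's (\ref{I3K}), and your treatment of $\mathcal{K}_1$ is sound (indeed slightly sharper than necessary: the paper simply reuses the argument of (\ref{J1}) with the bound $f_2-f_2^\ast\ll P_2^{1/2+\varepsilon}$, which already suffices). But the proposal breaks off before the only genuinely hard part: the factor $\mathcal{E}(\alpha)$ in $\mathcal{K}_2$ is never confronted. The term $P_4^{s-\tau+\varepsilon}\psi(X)^{-1}Z$ in the statement can only arise from an off-diagonal analysis of the $\mathcal{E}$-bearing integral, and in the paper this is exactly the estimate (\ref{di-off}): after Cauchy--Schwarz one expands $\int_{\mathfrak{m}_3}\bigl|f_2^{\ast}(\alpha)^2f_4(\alpha)^{2(s-3)}\mathcal{E}(\alpha)^2\bigr|\,d\alpha$ into additive characters $e(-h\alpha)$ with $h=n_1-n_2+x_1^4-x_2^4$ and uses $\int_{\mathfrak{M}(Y)}\bigl|f_2^\ast(\alpha)^2\bigr|e(-h\alpha)\,d\alpha\ll P_4^{\varepsilon}$ for $0<|h|\le 2X$ (and $\ll P_4^{\varepsilon}Y$ on the diagonal $h=0$), together with the constraint $\psi(X)\ll X^{1/64-\tau}$. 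Nothing in your sketch plays this role, so as it stands the proof does not produce the second term of the lemma, nor the correct $Z^{1/2}$ coefficient of the first.

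Moreover, the route you do sketch for $\mathcal{K}_2$ --- a sixth-moment saving $\int_{\mathfrak{m}_3}|f_4^\ast|^6\ll P_4^{2}Y^{1/2}$ --- would not transfer to the actual integrand, which involves $f_4$, not $f_4^\ast$. On $\mathfrak{m}_3$ the error in (\ref{difference}) is only $f_4-f_4^\ast\ll Y^{1/2}X^{\varepsilon}$ with $Y^{1/2}=P_4^{3/4+\tau/2}\psi(X)$, i.e.\ at least as large as your uniform bound $f_4^\ast\ll P_4^{3/4}$; and its sixth moment over $\mathfrak{M}(Y)$ is $\ll Y^{3}\,\mathrm{mes}(\mathfrak{M}(Y))X^{\varepsilon}\ll Y^{5}X^{-1+\varepsilon}\approx P_4^{7/2+5\tau+\varepsilon}\psi(X)^{10}$, which already exceeds the trivial Hua bound $P_4^{7/2+\varepsilon}$, so the advertised saving evaporates when you pass back from $f_4^\ast$ to $f_4$. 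The paper avoids this trap: it uses the error bound only pointwise, obtaining $f_4(\alpha)\ll P_4^{3/4+\tau/2+\varepsilon}\psi(X)$ on $\mathfrak{m}_3$, pulls out a single factor $\sup_{\alpha\in\mathfrak{m}_3}|f_4(\alpha)|$ (with $|f_4|^{s-3}$ of the remaining copies going into the $\mathcal{E}$-factor), and controls the other factor by the genuine mean value $\int_{\mathfrak{m}_3}\bigl|f_2^{\ast}(\alpha)^2f_4(\alpha)^4\bigr|\,d\alpha\ll P_4^{4+\varepsilon}$. To repair your argument you would need to restore this structure, or else prove an analogue of (\ref{di-off}) yourself; without one of these the proof has a genuine gap.
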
\begin{proof}
Similarly to (\ref{J12}) and (\ref{J1}), we can derive that
\begin{align}\label{I3K}\mathcal{I}_3 \ll P_4^{4-\frac{1}{4}+\frac{s-3}{2}+\varepsilon}Z^{1/2}+P_4^{s-\frac{1}{4}+\varepsilon}Z
+\mathcal{K},\end{align} where \begin{align*}\mathcal{K} =
\int_{\mathfrak{m}_3}\big|f_2^{\ast}(\alpha)^{2}f_4(\alpha)^s\mathcal{E}(\alpha)\big|d\alpha.\end{align*}
One has
\begin{align*}\mathcal{K} \le \sup_{\alpha\in
\mathfrak{m}_3}|f_4(\alpha)|&\Big(
\int_{\mathfrak{m}_3}\big|f_2^{\ast}(\alpha)^2f_4(\alpha)^4\big|d\alpha\Big)^{1/2}
\\ \times&\Big(
\int_{\mathfrak{m}_3}\big|f_2^{\ast}(\alpha)^2f_4(\alpha)^{2(s-3)}\mathcal{E}(\alpha)^2\big|d\alpha\Big)^{1/2}
.\end{align*} In view of (\ref{difference}) and (\ref{f-ast}), we
have for $\alpha\in \mathfrak{m}_3$ that
\begin{align*}f_4(\alpha) \ll P_4q^{-1/4}(1+X|\alpha-a/q|)^{-1/4}+Y^{1/2}X^{\varepsilon}\ll
P_4^{3/4+\tau/2+\varepsilon}\psi(X).\end{align*} Since
$f_2^{\ast}(\alpha)-f_2(\alpha)\ll P_2^{1/2}$ for $\alpha\in
\mathfrak{m}_3$, we easily deduce that
\begin{align*}
\int_{\mathfrak{m}_3}\Big|f_2^{\ast}(\alpha)^2f_4(\alpha)^4\big|d\alpha
\ll &\ P_2^{1/2}
\int_{0}^1\Big|f_2(\alpha)f_4(\alpha)^4\big|d\alpha+\int_{0}^1\big|f_2(\alpha)^2f_4(\alpha)^4\big|d\alpha
\\ \ll &\ P_4^{4+\varepsilon}.\end{align*}
Therefore we arrive at
\begin{align*}\mathcal{K} \ll P_4^{11/4+\tau/2+\varepsilon}\psi(X)\Big(
\int_{\mathfrak{m}_3}\big|f_2^{\ast}(\alpha)^2f_4(\alpha)^{2(s-3)}\mathcal{E}(\alpha)^2\big|d\alpha\Big)^{1/2}.\end{align*}
Similarly to (\ref{diandof}), we have the following estimate
\begin{align}\label{di-off}\int_{\mathfrak{M}(Y)}\big|f_2^\ast(\alpha)^2\big|e(-h\alpha)d\alpha
=\begin{cases}O(P_4^{\varepsilon}), &\textrm{ when } 0<|h|\le 2X,
\\ O(P_4^{\varepsilon}Y), &\textrm{ when } h=0.\end{cases}\end{align}
Note that
\begin{align*}&\ \int_{\mathfrak{M}(Y)}\big|f_2^\ast(\alpha)^2\big|e(-h\alpha)d\alpha
\\ =&\ \sum_{q\le Y}\int_{|\beta|\le
\frac{Y}{qX}} q^{-2}\Big(\sum_{\substack{a=1\\
(a,q)=1}}^{q}|S_2(q,a)|^2e(-ha/q)\Big)|v_2(\beta)|^2e(-h\beta)d\beta
\\ \ll&\ P_2^2\sum_{q\le Y}\int_{|\beta|\le
\frac{Y}{qX}} q^{-1}(q,h)(1+X|\beta|)^{-1}d\beta \\
\ll &\ (\log X)\sum_{q\le Y} q^{-1}(q,h).\end{align*} The desired
estimate (\ref{di-off}) follows easily from above. For $s=4$, we
derive that
\begin{align*}&
\int_{\mathfrak{m}_3}\big|f_2^{\ast}(\alpha)^2f_4(\alpha)^2\mathcal{E}(\alpha)^2\big|d\alpha
\\ \le &
\int_{\mathfrak{M}(Y)}\big|f_2^{\ast}(\alpha)^2f_4(\alpha)^2\mathcal{E}(\alpha)^2\big|d\alpha\\=&
\sum_{\substack{n_1,n_2\in \mathcal{Z}\\ 1\le x_1,x_2\le
P_4}}\eta(n_1)\overline{\eta(n_2)}\int_{\mathfrak{M}(Y)}\big|f_2^\ast(\alpha)^2\big|e(-(n_1-n_2+x_1^4-x_2^4)\alpha)d\alpha
\\ \ll &\ P_4^{2+\varepsilon}Z^2+P_4^{\varepsilon}Y(P_4^{\varepsilon}Z^2+P_4Z)
\\ \ll &\
\big(P_4^{2+\varepsilon}+P_4^{3/2+\tau+\varepsilon}\psi(X)^{2}\big)Z^2+P_4^{5/2+\tau+\varepsilon}\psi(X)^{2}Z,\end{align*}
whence
\begin{align*}\mathcal{K} \ll \big(P_4^{15/4+\tau/2+\varepsilon}\psi(X)+P_4^{7/2+\tau+\varepsilon}\psi(X)^2
\big)Z+P_4^{4+\tau+\varepsilon}\psi(X)^{2}Z^{1/2}.\end{align*} In
particular, we have
\begin{align*}\mathcal{K} \ll
P_4^{4+\frac{1}{4}+\varepsilon}Z^{1/2}+P_4^{4-\tau+\varepsilon}\psi(X)^{-1}Z\end{align*}
provided that $\psi(X)\ll X^{1/64-\tau}$. For $s=3$, by
(\ref{di-off}) we have
\begin{align*}
\int_{\mathfrak{m}_3}\big|f_2^{\ast}(\alpha)^2\mathcal{E}(\alpha)^2\big|d\alpha
\ \ll\ &
P_4^{\varepsilon}Z^2+P_4^{3/2+\tau+\varepsilon}\psi(X)^{2}Z,\end{align*}
whence
\begin{align*}\mathcal{K} \ll P_4^{11/4+\tau/2+\varepsilon}\psi(X)Z
+P_4^{7/2+\tau+\varepsilon}\psi(X)^{2}Z^{1/2}.\end{align*}When
$\psi(X)\ll X^{1/64-\tau}$, one has
\begin{align*}\mathcal{K}\ \ll
\ P_4^{4-\frac{1}{4}+\varepsilon}Z^{1/2}+
P_4^{3-\tau+\varepsilon}\psi(X)^{-1}Z.\end{align*} We conclude
from above that
\begin{align}\label{K}\mathcal{K}\ \ll \ P_4^{4-\frac{1}{4}+\frac{s-3}{2}+\varepsilon}Z^{1/2}+
P_4^{s-\tau+\varepsilon}\psi(X)^{-1}Z.\end{align}
 By
(\ref{I3K}) and (\ref{K}), we obtain
\begin{align}\label{I3}
\mathcal{I}_3 \ll
P_4^{4-\frac{1}{4}+\frac{s-3}{2}+\varepsilon}Z^{1/2}+P_4^{s-\tau+\varepsilon}\psi(X)^{-1}Z.\end{align}
We complete the proof.\end{proof}

\begin{lemma}\label{lemma34}Let $\mathcal{I}_4$ be defined in
(\ref{Ij}). Then we have
\begin{align*}\mathcal{I}_4 \ll & \ ZP_4^{s-(s-2)\nu/4+\varepsilon}.\end{align*}
\end{lemma}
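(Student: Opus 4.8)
The plan is to estimate $\mathcal{I}_4$ using only the trivial bound $|\mathcal{E}(\alpha)|\le Z$ together with \emph{sharp} mean-value estimates for $f_2$ and $f_4$ on $\mathfrak{m}_4$, so that no cancellation in $\mathcal{E}$ is needed; this is precisely why the final bound carries a single power of $Z$. First I record two facts valid on $\mathfrak{m}_4=\mathfrak{M}(P_4)\setminus\mathfrak{M}(R)$. Every $\alpha\in\mathfrak{m}_4$ has a unique approximation $\alpha=a/q+\beta$ with $q\le P_4$, $(a,q)=1$, and it satisfies $q(1+X|\beta|)>R$: if $q>R$ this is immediate, while if $q\le R$ the condition $\alpha\notin\mathfrak{M}(R)$ forces $qX|\beta|>R$. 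Moreover $q(1+X|\beta|)\le 2P_4$ on all of $\mathfrak{M}(P_4)$. Combining these with (\ref{difference}) and (\ref{f-ast}) yields the improved approximation error $f_k(\alpha)-f_k^{\ast}(\alpha)\ll P_4^{1/2+\varepsilon}$ for $k=2,4$ (much better than the generic $P_2^{1/2}$), and the supremum bound $\sup_{\alpha\in\mathfrak{m}_4}|f_4(\alpha)|\ll P_4R^{-1/4}$.

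Next I would write $|f_2|^2\ll|f_2-f_2^\ast|^2+|f_2^\ast|^2$, so that $\mathcal{I}_4\ll\mathcal{A}+\mathcal{B}$ with $\mathcal{A}=\int_{\mathfrak{m}_4}|f_2-f_2^\ast|^2|f_4|^s|\mathcal{E}|\,d\alpha$ and $\mathcal{B}=\int_{\mathfrak{m}_4}|f_2^\ast|^2|f_4|^s|\mathcal{E}|\,d\alpha$. In each term I use $|\mathcal{E}(\alpha)|\le Z$ and extract the factor $\sup_{\mathfrak{m}_4}|f_4|^{s-2}\ll(P_4R^{-1/4})^{s-2}=P_4^{s-2}R^{-(s-2)/4}$ (here $s-2\in\{1,2\}$), leaving two copies of $f_4$. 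For $\mathcal{A}$ the estimate $|f_2-f_2^\ast|^2\ll P_4^{1+\varepsilon}$ and the trivial mean value $\int_{\mathfrak{m}_4}|f_4|^2\le\int_0^1|f_4|^2\,d\alpha=P_4$ give $\mathcal{A}\ll ZP_4^{1+\varepsilon}\cdot P_4^{s-2}R^{-(s-2)/4}\cdot P_4=ZP_4^{s+\varepsilon}R^{-(s-2)/4}$, which is the claimed bound since $R^{-(s-2)/4}=P_4^{-(s-2)\nu/4}$. For $\mathcal{B}$ the same reduction gives $\mathcal{B}\ll ZP_4^{s-2}R^{-(s-2)/4}\int_{\mathfrak{m}_4}|f_2^\ast|^2|f_4|^2\,d\alpha$, so it remains to prove the key mean-value estimate
\begin{align*}\int_{\mathfrak{m}_4}|f_2^\ast(\alpha)|^2|f_4(\alpha)|^2\,d\alpha\ll P_4^{2+\varepsilon}.\end{align*}

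For this I would again split $|f_4|^2\ll|f_4-f_4^\ast|^2+|f_4^\ast|^2$. The term with $|f_4-f_4^\ast|^2\ll P_4^{1+\varepsilon}$ is controlled by the first-moment bound $\int_{\mathfrak{m}_4}|f_2^\ast|^2\,d\alpha\le\int_{\mathfrak{M}(P_4)}|f_2^\ast|^2\,d\alpha\ll P_4^{1+\varepsilon}$, which follows from $|S_2(q,a)|^2\le 2q$ and $\sum_{q\le P_4}\phi(q)q^{-1}\ll P_4$; this gives a total contribution $\ll P_4^{2+\varepsilon}$. For the main term $\int_{\mathfrak{M}(P_4)}|f_2^\ast|^2|f_4^\ast|^2\,d\alpha$ I expand $|f_k^\ast|^2=q^{-2}|S_k(q,a)|^2|v_k(\beta)|^2$; after the $\beta$-integration, $\int_{\R}|v_2(\beta)|^2|v_4(\beta)|^2\,d\beta\ll P_2^2P_4^2X^{-1}$, one is left with $\sum_{q\le P_4}q^{-4}\sum_{(a,q)=1}|S_2(q,a)|^2|S_4(q,a)|^2$. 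The crucial input — and the step I expect to be the main obstacle — is the second-moment bound for the biquadratic Gauss sum, $\sum_{(a,q)=1}|S_4(q,a)|^2\le q\,N(q)\ll q^{2+\varepsilon}$, where $N(q)$ counts the pairs $(r_1,r_2)$ modulo $q$ with $r_1^4\equiv r_2^4\ ({\rm mod}\ q)$; this is far stronger than the pointwise estimate $|S_4(q,a)|^2\ll q^{3/2}$ coming from $q^{-1}S_4(q,a)\ll w_4(q)$, and it is exactly what removes the wasteful factor $P_4^{1/2}$ that a purely pointwise treatment of the mean value would incur. Together with $|S_2(q,a)|^2\le 2q$ this yields $\sum_{(a,q)=1}|S_2(q,a)|^2|S_4(q,a)|^2\ll q^{3+\varepsilon}$, whence the main term is $\ll P_2^2P_4^2X^{-1}\sum_{q\le P_4}q^{-1+\varepsilon}\ll P_4^{2+\varepsilon}$, as required.
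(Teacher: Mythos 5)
Your skeleton (splitting off the approximation errors $f_k-f_k^\ast$ via $q(1+X|\beta|)\le 2P_4$, the observation $q(1+X|\beta|)>R$ on $\mathfrak{m}_4$ giving $\sup_{\mathfrak{m}_4}|f_4|\ll P_4R^{-1/4+\varepsilon}$, the first moment $\int_{\mathfrak{M}(P_4)}|f_2^\ast|^2\,d\alpha\ll P_4^{1+\varepsilon}$, and the $\beta$-integral $\ll P_2^2P_4^2X^{-1}$) is sound, but the step you yourself flag as the crucial input is false. The identity $\sum_{a=1}^q|S_4(q,a)|^2=qN(q)$ and the positivity inequality $\sum_{(a,q)=1}|S_4(q,a)|^2\le qN(q)$ are correct, but $N(q)\ll q^{1+\varepsilon}$ fails whenever $q$ has a sizable fourth-power-full part: already for $q=p^4$, every pair with $p\mid r_1$ and $p\mid r_2$ satisfies $r_1^4\equiv r_2^4\equiv 0\pmod{p^4}$, so $N(p^4)\ge p^6=q^{3/2}$. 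Nor does the coprimality restriction rescue the claim: for $q=p^5$ and $(a,p)=1$ one has $S_4(p^5,a)=p^3S_4(p,a)$, so $|S_4(p^5,a)|\asymp p^{7/2}$ for typical $a$, whence $\sum_{(a,q)=1}|S_4(q,a)|^2\asymp q^{12/5}$, exceeding your claimed $q^{2+\varepsilon}$; correspondingly $\sum_{(a,q)=1}|S_2(q,a)|^2|S_4(q,a)|^2$ is of order $q^{17/5}$ there, not $q^{3+\varepsilon}$, and your final display is unjustified.

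The repair is exactly the device the paper uses: give up on a per-modulus power saving and average over $q$ instead. For $(a,q)=1$ one has $|S_4(q,a)|\ll qw_4(q)$, so $q^{-4}\sum_{(a,q)=1}|S_2(q,a)|^2|S_4(q,a)|^2\ll w_4(q)^2$, and Lemma 2.4 of Kawada and Wooley \cite{KW} (the very citation in the paper's proof) gives $\sum_{q\le P_4}w_4(q)^2\ll P_4^{\varepsilon}$: the bad moduli $q=p^{4u+1}$, where $w_4(q)^2$ is as large as $q^{-(2u+1)/(4u+1)}$, are too sparse to affect the average. Your diagnosis that a pointwise treatment must waste a factor $P_4^{1/2}$ is accurate only if one crudely simplifies $w_4(q)\ll q^{-1/4}$; keeping $w_4(q)$ intact and summing removes the loss, so no second-moment Gauss-sum estimate is needed at all. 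With this substitution your main term closes and the lemma follows. For comparison, the paper's own proof is more economical than your route: it majorizes $|f_4|$ and $|f_2|$ pointwise on all of $\mathfrak{M}(P_4)$ by $P_4^{1+\varepsilon}w_4(q)(1+X|\alpha-a/q|)^{-1/4}$ and $P_2q^{-1/2}(1+X|\alpha-a/q|)^{-1/2}$, absorbing the errors from (\ref{difference}) directly into the majorants (legitimate since $q(1+X|\beta|)\le 2P_4$), so that the single integral $\int_{\mathfrak{M}(P_4)}|f_4f_2|^2\,d\alpha\ll P_4^{2+\varepsilon}\sum_{q\le P_4}w_4(q)^2$ appears at once, with no splitting into $f_k^\ast$ and error pieces and no explicit Gauss-sum moment computation.
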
\begin{proof}
In view of (\ref{difference}) and (\ref{f-ast}), for
$\alpha\in\mathfrak{M}_{P_4}(q,a)$, one has
\begin{align*}f_4(\alpha)\ \ll\ & P_4w_4(q)(1+X|\alpha-a/q|)^{-1/4}+P_4^{1/2+\varepsilon}
\\ \  \ll \ & P_4^{1+\varepsilon}w_4(q)(1+X|\alpha-a/q|)^{-1/4},\end{align*}
and  \begin{align*}f_2(\alpha)\ \ll\ &
P_2q^{-1/2}(1+X|\alpha-a/q|)^{-1/2}.\end{align*}Therefore we
obtain
\begin{align*}\mathcal{I}_4 \ll & \ Z\sup_{\alpha\in\mathfrak{m}_4}|f_4(\alpha)|^{s-2}
\int_{\mathfrak{M}(P_4)}|f_4(\alpha)f_2(\alpha)|^2d\alpha
\\ \ll &\ ZP_4^{(s-2)(1-\nu/4)+\varepsilon}P_4^2P_2^2\sum_{q\le P_4}w_4(q)^2
\int_{|\beta|\le \frac{P_4}{qX}}(1+X|\beta|)^{-3/2} d\beta
\\ \ll &\ ZP_4^{2+(s-2)(1-\nu/4)+\varepsilon}\sum_{q\le P_4}w_4(q)^2.\end{align*}
In light of Lemma 2.4 by Kawada and Wooley \cite{KW}, one can
conclude that
\begin{align}\label{I4}
\mathcal{I}_4 \ll & \ ZP_4^{2+(s-2)(1-\nu/4)+\varepsilon}\ll
ZP_4^{s-(s-2)\nu/4+\varepsilon}.\end{align} The desired estimate
is established.\end{proof}

\vskip6mm

\begin{proof}[Proof of Theorem \ref{theorem}]
We denote by $Z_s(X)$ the set of integers $n$ with $X/2< n\le X$
for which the lower bound \begin{align*}\big|R_s(n)-c_s
\Gamma(\frac{5}{4})^4\mathfrak{S}_s(n)n^{s/4}\big|>n^{s/4}\psi(n)^{-1}\end{align*}holds,
and we abbreviate card($Z_s(X)$) to $Z_s$. It follows from
(\ref{orth}) and Lemma \ref{lemmaM} that for $n\in Z_s(X)$,
\begin{align*}\Big|\int_{\mathfrak{m}}f_2(\alpha)^2f_4(\alpha)^se(-n\alpha)d\alpha\Big|\gg X^{s/4}\psi(X)^{-1},\end{align*}
whence
\begin{align*}\sum_{n\in Z_s(X)}\Big|\int_{\mathfrak{m}}f_2(\alpha)^2f_4(\alpha)^se(-n\alpha)d\alpha\Big|\gg Z_sX^{s/4}\psi(X)^{-1}.\end{align*}
We choose complex numbers $\eta(n)$, with $|\eta(n)|=1$,
satisfying
\begin{align*}\Big|\int_{\mathfrak{m}}f_2(\alpha)^2f_4(\alpha)^se(-n\alpha)d\alpha\Big|=
\eta(n)\int_{\mathfrak{m}}f_2(\alpha)^2f_4(\alpha)^se(-n\alpha)d\alpha.\end{align*}
Then we define the exponential sum $\mathcal{E}_s(\alpha)$ by
\begin{align*}\mathcal{E}_s(\alpha)=\sum_{n\in Z_s(X)}
\eta(n)e(-n\alpha).\end{align*}One finds that
\begin{align}\label{Z}
Z_sX^{s/4}\psi(X)^{-1}\ll
\int_{\mathfrak{m}}\big|f_2(\alpha)^2f_4(\alpha)^s\mathcal{E}_s(\alpha)\big|d\alpha.\end{align}
Note that $\mathfrak{m}=\mathfrak{m}_1\cup \mathfrak{m}_2\cup
\mathfrak{m}_3\cup \mathfrak{m}_4$. Now we conclude from Lemmata
\ref{lemma31}-\ref{lemma34} and (\ref{Z}) that
\begin{align*}
Z_sX^{s/4}\psi(X)^{-1}\ll
P_4^{4-\frac{1}{4}+\frac{s-3}{2}+\varepsilon}Z_s^{1/2}+P_4^{s-\delta}\psi(X)^{-1}Z_s
\end{align*}for some sufficiently small positive number $\delta$. Therefore we have \begin{align}\label{fin}
Z_sX^{s/4}\psi(X)^{-1} \ll
X^{1-\frac{1}{16}+\frac{s-3}{8}+\varepsilon}Z_s^{1/2}.\end{align}
The estimate (\ref{fin}) implies $Z_3\ll
X^{3/8+\varepsilon}\psi(X)^{2}$ and $Z_4\ll
X^{1/8+\varepsilon}\psi(X)^{2}$. The proof of Theorem
\ref{theorem} is completed by summing over dyadic
intervals.\end{proof}

\vskip3mm

\subsection*{Acknowledgements} The author would like to thank the
referees for many helpful comments and suggestions.

\vskip3mm

\end{document}